\theoremstyle{plain}
\newtheorem{theorem}{Theorem}[section]
\newtheorem{lemma}[theorem]{Lemma}
\newtheorem{proposition}[theorem]{Proposition}
\theoremstyle{definition}
\theoremstyle{remark}
\newcommand{\qbinomial}{\genfrac{[}{]}{0pt}{}}
\begin{document}


\title{Second--order difference equation for Sobolev--type orthogonal polynomials. Part I: theoretical results}

\author{
\name{Galina Filipuk\textsuperscript{a}, Juan F. Ma\~{n}as--Ma\~{n}as\textsuperscript{b}\thanks{CONTACT Juan F. Ma\~{n}as--Ma\~{n}as. Email: jmm939@ual.es} and Juan J. Moreno--Balc\'{a}zar\textsuperscript{b,c}}
\affil{\textsuperscript{a}Institute of Mathematics, Faculty of Mathematics, Informatics and Mechanics, University of Warsaw, Poland\\ \textsuperscript{b}Departamento de Matem\'{a}ticas, Universidad de Almer\'{\i}a, Spain \\ \textsuperscript{c}Instituto Carlos I de F\'{\i}sica Te\'{o}rica y Computacional, Universidad de Granada, Spain}
}

\maketitle

\begin{abstract}
We consider a general Sobolev--type inner product involving the Hahn difference operator, so this includes the well--known difference operators $\mathscr{D}_{q}$ and $\Delta$ and, as a limit case, the derivative operator. The objective is to construct the ladder operators for the corresponding nonstandard orthogonal polynomials  and   deduce the second--order differential--difference equation satisfied by these polynomials. Moreover, we will show that all the functions involved in these constructions can be computed explicitly.
\end{abstract}

\begin{keywords}
Sobolev orthogonal polynomials; Hahn difference operator; Ladder operators; Difference equations; Classical orthogonal polynomials; Differential operator;

\noindent\textit{MSC 2020:} 33C47; 39A05; 42C05
\end{keywords}

\section{Introduction}\label{introducction}

The literature about Sobolev orthogonal polynomials is very extensive, dating back to the sixties of the last century although the seminal paper on this topic is due to Lewis in 1947. End of the eighties and the nineties saw a burst of papers about Sobolev--type orthogonal polynomials. A general and historic vision until 2015 is given in the survey \cite{marxu} where the authors rename these nonstandard orthogonal polynomials as Sobolev orthogonal polynomials of the second type, although  the most frequently used names are Sobolev--type orthogonal polynomials and discrete Sobolev orthogonal polynomials. In some of those papers the  Sobolev--type inner product
$$
(f,g)_S=\int_{\mathbb{R}} f(x)g(x)\varrho(x)dx+ M f^{(j)}(c) g^{(j)}(c), \quad M>0, \quad j\in \mathbb{N},
$$
was considered, where $\varrho(x)$ is a  weight function with support on the real line and $c$ is  located on the real axis. The authors of those papers studied extensively algebraic, differential and asymptotic properties of the corresponding Sobolev orthogonal polynomials (SOP), including generalizations of the inner product or even  the varying case,  where the constant $M$ is changed by a general sequence depending on $n$ (see \cite{SGV16} for the varying case).  Notice  if we allow $j=0,$ then the inner product will be standard and, therefore, we have all the advantages of the standard polynomials such as the three--term recurrence relation, Christoffel--Darboux formula, etc. Thus, to use the word Sobolev properly we should take $j$ as a positive integer, i.e. $j\in \mathbb{N}$.

Recently, there has been a growing interest in considering nonstandard inner products with noncontinuous weights, i.e. weights supported on  some lattices. According to the weight, the differential operator is substituted  either by the difference operator $\Delta$ or by the $q$--difference operator $\mathscr{D}_{q}.$  This leads to $\Delta$--SOP or $q$--SOP.  In many papers (e.g. \cite{AMRR, acmp1, area-godoy-Marcellan-2002,area-godoy-Marcellan-2003,area-godoy-et-2000-2,Bavinck-Koekoek-1998,cp,cjmp,Roberto-Anier-2018,Koekoek-1990-CJM,
Koekoek-1992-JAT,JJMB-2015,Sha-Gad-2016}) these SOP were studied  by considering  particular weights or by taking  one of these two difference operators.  However, in other papers (e.g. \cite{acmp2,Renato-et-2014-JCAM,costas-juanjo-2011}) these three operators were treated in a unified way via the Hahn difference operator.

Ladder operators for orthogonal polynomials have been extensively studied in the literature, see  \cite{Chen-Griffin-2002,Chen-Ismail-1997, Chen-Ismail-2008, Chen-Its-2010,Chen-Pruessner-2005,Ismail-2005,walter-2017} among other references. One reason for this, but not the only one, is that they are a nice and useful tool to construct differential (or difference) equations satisfied by orthogonal polynomials.

Thus, the main goal of this paper is the study of the ladder operators for a wide class of  Sobolev--type orthogonal polynomials using the Hahn difference operator as a powerful tool and, as a consequence, to provide the second--order differential--difference equation satisfied by these nonstandard polynomials. In addition,   we generalize some results of other contributions to this topic where the authors considered particular cases, e.g., \cite{{Arceo-H-M-2016},{Fil-JFMM-JuanJo-2018},{Garza-et-2019},{Huertas-Soria-2019}}.

In this paper, we consider the Sobolev--type inner product

\begin{equation}\label{inner-product}
(f,g)_S=\int_{\mathbb{R}} f(x)g(x)\varrho(x)dx+M\mathscr{D}_{q,\omega}^{(j)}f(c)\mathscr{D}_{q,\omega}^{(j)}g(c),
\end{equation}
where $\varrho(x)$ is a weight  function with support on the real line, $c$ is  located on the real axis, $M>0$, $j$ is a non--negative integer and $\mathscr{D}_{q,\omega}$ is the operator introduced by Hahn in \cite[Eq. (1.3)]{Hahn-1949} (see  also  \cite[Eq. (2.1.1)]{Koekoek-book-hyper})  defined by
\begin{equation}\label{hahn-operator}
\mathscr{D}_{q,\omega}f(x)=\left\{
                      \begin{array}{ll}
                       \displaystyle \frac{f(qx+\omega)-f(x)}{(q-1)x+\omega}, & \hbox{if $x\neq\omega_0$,} \\
                        f'(\omega_0), & \hbox{if $x=\omega_0$,}
                      \end{array}
                    \right.
\end{equation}
where $0<q<1$, $\omega\geq0$, $\omega_0=\frac{\omega}{1-q},$ and following the notation given in \cite{Hamza-et-2012}, we define
$$\mathscr{D}_{q,\omega}^{(0)}f:=f \quad \mathrm{and} \quad \mathscr{D}_{q,\omega}^{(j)}f:=\mathscr{D}_{q,\omega}\mathscr{D}_{q,\omega}^{(j-1)}f,\quad j\in \mathbb{N}.$$

Thus, $\mathscr{D}_{q,\omega}^{(j)}f(c)$ in (\ref{inner-product}) means
\begin{equation}\label{notation}
\mathscr{D}_{q,\omega}^{(j)}f(c)=\left.\mathscr{D}_{q,\omega}^{(j)}f(x)\right|_{x=c}.
\end{equation}
To simplify the notation we will write $\mathscr{D}_{q,\omega}^{(j)}f(c)$ instead of $\left.\mathscr{D}_{q,\omega}^{(j)}f(x)\right|_{x=c}$ when no confusion arises. Notice when $\varrho(x)$ is a discrete weight, then the integral in (\ref{inner-product}) must be considered adequately, so that according to the case, it could be the Jackson--N\"{o}rlund integral (see \cite{Hamza-et-2012}) or  the Jackson integral.

The class of operators given by (\ref{hahn-operator}) includes
the $q$-difference operator $\mathscr{D}_{q}$ introduced by Jackson  when $\omega = 0$, the  forward difference operator $\Delta$  when  $q = 1$ and $\omega = 1,$
and  the derivative operator $\frac{d}{dx}$ as the limiting case when $\omega = 0$ and $q \to 1$.

 When $q=1$ the difference operator $\mathscr{D}_{\omega}$ acts on general lattices, with $\Delta$ as a particular case. However, we  will only give the explicit formulas in  Appendices A, B, C for the three operators:   $\mathscr{D}_{q},$ $\Delta, $ and $\frac{d}{dx}.$  We   will not include an appendix for the operator $\mathscr{D}_{\omega}$,  because the above mentioned three operators are most commonly used  in the literature in the framework of the Sobolev orthogonality (see the survey \cite{marxu}), but the results hold for any $\omega\ge 0.$

Let $\{Q_n\}_{n\geq0}$ be the infinite sequence of monic orthogonal polynomials with respect to the inner product (\ref{inner-product}).
Also, let $\{P_n\}_{n\geq 0}$ be the infinite sequence of monic polynomials  orthogonal  with respect to the inner product

\begin{equation}\label{standart-product}
(f,g)_{\varrho}=\int_{\mathbb{R}} f(x)g(x)\varrho(x)dx.
\end{equation}
Thus,

\begin{equation*}\label{h_n}
(P_n,P_k)_{\varrho}=\int_{\mathbb{R}} P_n(x)P_k(x)\varrho(x)dx=h_n\delta_{n,k},\qquad n,k\in\mathbb{N}\cup\{0\},
\end{equation*}
where $\delta_{n,k}$ denotes the  Kronecker delta and $h_n$ is the  square
of the norm of these polynomials. Clearly, we assume that the two previous sequences of polynomials are infinite.

Since the inner product (\ref{standart-product})  is standard, i.e., the property $(xf,g)_{\varrho}=(f,xg)_{\varrho}$ holds, then the sequence of monic orthogonal polynomials $\{P_n\}_{n\geq 0}$ satisfies a three-term recurrence relation of the following form:

\begin{equation}\label{ttrr}
xP_n(x)=P_{n+1}(x)+\alpha_nP_n(x)+\beta_nP_{n-1}(x),\qquad n\geq0,
\end{equation}
with  initial conditions $P_{-1}(x)=0$ and $P_0(x)=1.$ In addition, $\beta_n=h_n/h_{n-1}$ for  $n\geq1.$
When the weight  $\varrho$ is symmetric,  $\alpha_n=0$ (see, for example, \cite{Chihara-1978}).

As we have mentioned previously, our objective is to obtain ladder operators and a linear second--order differential--difference equation for  sequence of  monic  Sobolev orthogonal polynomials $\{Q_n\}_{n\geq0}$.

We assume the sequence of  monic orthogonal polynomials $\{P_n\}_{n\geq 0}$  with respect to (\ref{standart-product}) satisfies the following relation:

\begin{equation}\label{Low-rela-Pn}
A(x)\mathscr{D}_{q,\omega}P_n(x)=B_n(x)P_n(x)+C_n(x)P_{n-1}(x), \qquad n\geq1,
\end{equation}
where $B_n(x)$ and $C_n(x)$ are certain rational functions and $A(x)$ is a polynomial. Notice relation (\ref{Low-rela-Pn}) is very general. It holds for general standard orthogonal polynomials with respect to inner products involving any of the three operators  $\mathscr{D}_{q},$ $\Delta, $ and $\frac{d}{dx}$  (see \cite{Ismail-2005}).

Then, we define the lowering operator from (\ref{Low-rela-Pn}) as
\begin{equation*}\label{Low-operator-Pn}
\Psi_n:=A(x)\mathscr{D}_{q,\omega}-B_n(x), \qquad n\geq1,
\end{equation*}
hence,
\begin{equation*}\label{Low-operator-Pn-2}\Psi_nP_n(x)=C_n(x)P_{n-1}(x), \qquad n\geq1.\end{equation*}

The raising operator is defined as follows:
\begin{equation*}\label{Rai-operator-Pn}
\hat{\Psi}_n:=B_{n-1}(x)+\frac{C_{n-1}(x)(x-\alpha_{n-1})}{\beta_{n-1}}-A(x)\mathscr{D}_{q,\omega}, \qquad n\geq2,
\end{equation*}
hence, using (\ref{ttrr}), we deduce
\begin{equation*}\label{proof-1}
\hat{\Psi}_nP_{n-1}(x)=\frac{C_{n-1}(x)}{\beta_{n-1}}P_{n}(x), \qquad n\geq2.
\end{equation*}

Once obtained the suitable properties of these ladder operators, we can find the second--order differential--difference equation for the polynomials $Q_n(x).$

Our approach is based on obtaining ladder operators for this wide class of Sobolev--type orthogonal polynomials and then, as a consequence, to deduce the second--order differential--difference equation satisfied by these polynomials. In this sense, we follow the steps by Ismail in his book \cite{Ismail-2005} for standard continuous orthogonal polynomials, firstly obtaining the ladder operators and secondly the corresponding second--order differential equation.  Thus, we have unified this approach by considering a very general operator $\mathscr{D}_{q,\omega}$ which includes, as far as we know, all the operators used in the literature in the framework of Sobolev--type orthogonality. Furthermore, Appendices A, B, and C allow us to obtain efficiently, via a symbolic computer program,  all the functions involved both in the ladder operators and in the second--order differential--difference equation.

This paper is structured as follows. In Section 2 we introduce some basic notation and provide a useful connection formula for the SOP, $Q_n,$ in terms of the standard orthogonal polynomials, $P_n.$ Section 3 is devoted to  finding  several relationships between  these two families of orthogonal polynomials. The use of the Hahn difference operator and its properties will be essential to deduce these relationships which will be the key to obtain the main results of this paper.  Thus, in Section 4 we obtain the expressions for the ladder operators and the second--order differential--difference equation satisfied by the SOP, which constitute the goals of this work as  we have mentioned previously. Finally, we provide one appendix for each one of the three operators: $\mathscr{D}_{q},$ $\Delta, $ and $\frac{d}{dx}.$ In these appendices we give explicitly the expressions of the functions involved in the construction of the ladder operators as well as in the coefficients of the second--order differential--difference equations.

\section{Connection formula}

In this section we provide a connection formula  for the  orthogonal polynomials with respect to (\ref{inner-product}), $\{Q_n\}_{n\geq 0}$,  in terms of the orthogonal polynomials with respect to (\ref{standart-product}), $\{P_n\}_{n\geq 0}$. In fact,  this formula follows from a well--known standard technique which we have modified slightly using the Hahn operator.

 We introduce some notation that will be used throughout the paper.  We define the kernel polynomials in the usual way by

\begin{equation}\label{kernel}
K_n(x,y):=\sum_{i=0}^{n}\frac{P_i(x)P_i(y)}{h_i}.
\end{equation}
The kernel polynomials (\ref{kernel}) satisfy the  Christoffel--Darboux formula (see, for example, \cite{Chihara-1978} or \cite{sz})
\begin{equation}\label{Chri-Dar-for}
K_n(x,y)=\frac{1}{h_n}\frac{P_{n+1}(x)P_n(y)-P_n(x)P_{n+1}(y)}{x-y}.
\end{equation}

As usual, we denote
\begin{equation}\label{derivatives-kernel}
K_n^{(k,\ell)}(x,y):=\sum_{i=0}^{n}\frac{\mathscr{D}_{q,\omega}^{(k)}P_i(x)\ \mathscr{D}_{q,\omega}^{(\ell)}P_i(y)}{h_i}, \qquad k,\ell\in\mathbb{N}\cup\{0\}.
\end{equation}

In order to obtain the connection formula, we use a similar technique to the one used in  \cite[Lemma 2]{SGV16} or \cite[Section 2]{maRon}.
Since the sequence of  orthogonal polynomials $\{P_n\}_{n\geq 0}$ is a basis of the linear space $\mathbb{P}_n[x]$ of polynomials with real coefficients of degree at most $n,$  we write
\begin{equation}\label{q-relation-1}
Q_n(x)=\sum_{k=0}^n a_{n,k}P_k(x).
\end{equation}
The coefficient $a_{n,n}=1$ since $Q_n(x)$ and $P_n(x)$ are monic polynomials. For $0\leq k\leq n-1$, using the orthogonality of $Q_n(x)$ and $P_n(x)$, we immediately obtain
\begin{equation}\label{ank}
a_{n,k}=-\frac{M\mathscr{D}_{q,\omega}^{(j)}Q_n(c)\mathscr{D}_{q,\omega}^{(j)}P_k(c)}{h_k}.
\end{equation}
So, considering  (\ref{kernel}), (\ref{q-relation-1}) and (\ref{ank}),

\begin{equation}
Q_n(x)=P_n(x)-M\mathscr{D}_{q,\omega}^{(j)}Q_n(c)K_{n-1}^{(0,j)}(x,c).\label{Qn-connection-formula}
\end{equation}

To compute the value of $\mathscr{D}_{q,\omega}^{(j)}Q_n(c)$, we apply  the Hahn operator $\mathscr{D}_{q,\omega}$ in (\ref{Qn-connection-formula}) getting
\begin{equation*}\label{value-of-D(j)Qn(c)}
\mathscr{D}_{q,\omega}^{(j)}Q_n(c)=\frac{\mathscr{D}_{q,\omega}^{(j)}P_n(c)}{1+MK_{n-1}^{(j,j)}(c,c)}.
\end{equation*}
Therefore, the connection formula between both families of orthogonal polynomials is given by
\begin{equation}\label{qn-relation-first}
Q_n(x)=P_n(x)-\frac{M\mathscr{D}_{q,\omega}^{(j)}P_n(c)}{1+MK_{n-1}^{(j,j)}(c,c)}K_{n-1}^{(0,j)}(x,c).
\end{equation}

\section{Hahn difference operator and technical relations between families of orthogonal polynomials $\boldsymbol{Q_n(x)}$ and $\boldsymbol{P_n(x)}$}
In this section we will obtain different relations between $P_n(x)$ and $Q_n(x)$ which will be used to find the ladder operators for the polynomials $Q_n(x)$.
First, we need some properties of the Hahn difference operator  collected in the next statement.

\begin{proposition}
The Hahn difference operator has the following  properties:
 \begin{enumerate}
\item Linearity:
\begin{equation*}\label{h-linear}
\mathscr{D}_{q,\omega}\left(f+g\right)(x)=\mathscr{D}_{q,\omega} f(x)+\mathscr{D}_{q,\omega} g(x),
\end{equation*}
\begin{equation*}\label{h-linear1}
\mathscr{D}_{q,\omega}\left(af\right)(x)=a\mathscr{D}_{q,\omega} f(x), \quad a \in \mathbb{R}.
\end{equation*}
\item Product rule:
\begin{equation}\label{h-rule-product}
\mathscr{D}_{q,\omega}(fg)(x)=g(x)\mathscr{D}_{q,\omega} f(x)+f(qx+\omega)\mathscr{D}_{q,\omega} g(x).
\end{equation}
\item Quotient rule:
\begin{equation}\label{h-quotient-rule}
\mathscr{D}_{q,\omega}\left(\frac{f}{g}\right)(x)=\frac{g(x)\mathscr{D}_{q,\omega}f(x)-f(x)\mathscr{D}_{q,\omega}
g(x)}{g(x)g(qx+\omega)}.
\end{equation}
\item  Leibniz formula:
\begin{equation}\label{h-Leibniz}
\mathscr{D}_{q,\omega}^{(n)}(fg)(x)=\sum_{k=0}^{n}\qbinomial{n}{k}_{q}q^{k(k-n)}\mathscr{D}_{q,\omega}^{(k)} g\left(x\right)\mathscr{D}_{q,\omega}^{(n-k)} f\left(q^k x+\omega[k]_q\right),\qquad n\geq0,
\end{equation}
where the $q$-binomial coefficient is defined by (see, \cite[Eq. (1.9.4)]{Koekoek-book-hyper})
\begin{equation}\label{q-binomial}
\qbinomial{n}{k}_{q}:=\frac{(q;q)_n}{(q;q)_k (q;q)_{n-k}},
\end{equation}
with $(a;q)_k$ being a $q$-analogue of the Pochhammer symbol $(a)_k$ defined by (see, \cite[Eq. (1.8.3)]{Koekoek-book-hyper})
\begin{equation}\label{q-Pochhammer}
(a ; q)_{0}:=1 \quad \text { and } \quad(a ; q)_{k}:=\prod_{i=1}^{k}\left(1-a q^{i-1}\right), \qquad k\in \mathbb{N},
\end{equation}
and for $q\neq0$ and $q\neq1$    the basic $q$-number is given by (see, \cite[Eq. (1.8.1)]{Koekoek-book-hyper})
\begin{equation}\label{q-number}
[\alpha]_q:=\frac{1-q^{\alpha}}{1-q}, \qquad \alpha\in \mathbb{R}.
\end{equation}
\item Let $\alpha,\gamma\in \mathbb{R}$. We take $f(x)=(\beta-\gamma x)^{-1}$, then
\begin{equation}\label{h-derivate-qoutient}
\mathscr{D}_{q,\omega}^{(n)}f(x)=\frac{\gamma^n (q;q)_n}{(1-q)^{n}\prod_{i=0}^n(\beta-\gamma (q^{i}x+\omega[i]_q))}.
\end{equation}
\end{enumerate}
\end{proposition}

\begin{proof}
The linearity follows directly from the definition. The product and quotient rules can be found, for example,  in \cite[Eq. (16)-(17)]{Hamza-et-2012}.

To establish (\ref{h-Leibniz}) we need the formula
\begin{equation*}
\mathscr{D}_{q,\omega}^{(n)}(fg)(x)=\sum_{k=0}^{n}\qbinomial{n}{k}_{q}\mathscr{D}_{q,\omega}^{(k)}  g\left(x\right)\left(\mathscr{D}_{q,\omega}^{(n-k)}f\right) \left(q^k x+\omega[k]_q\right),
\end{equation*}
given in \cite[Theorem 3.1]{Hamza-et-2012}. Then, it is enough to apply \cite[Eq. (26)]{Hamza-et-2012} repeatedly to obtain the result:
\begin{align*}
\mathscr{D}_{q,\omega}^{(n)}(fg)(x)
&=\sum_{k=0}^{n}\qbinomial{n}{k}_{q}\mathscr{D}_{q,\omega}^{(k)} g\left(x\right)\left(\mathscr{D}_{q,\omega}^{(n-k)}f\right) \left(q^k x+\omega[k]_q\right)\\
&=\sum_{k=0}^{n}\qbinomial{n}{k}_{q}\mathscr{D}_{q,\omega}^{(k)} g\left(x\right)q^{-k}\mathscr{D}_{q,\omega}\left(\mathscr{D}_{q,\omega}^{(n-k-1)}f\right) \left(q^k x+\omega[k]_q\right)\\
&=\sum_{k=0}^{n}\qbinomial{n}{k}_{q}\mathscr{D}_{q,\omega}^{(k)} g\left(x\right)q^{-2k}\mathscr{D}_{q,\omega}^{(2)}\left(\mathscr{D}_{q,\omega}^{(n-k-2)}f\right) \left(q^k x+\omega[k]_q\right) \\
&=\cdots\\
&=\sum_{k=0}^{n}\qbinomial{n}{k}_{q}q^{k(k-n)}\mathscr{D}_{q,\omega}^{(k)}g\left(x\right)\mathscr{D}_{q,\omega}^{(n-k)}f(q^k x+\omega[k]_q).
\end{align*}

Finally, to prove (\ref{h-derivate-qoutient}), we use mathematical induction on $n$.
For $f(x)=(\beta-\gamma x)^{-1}$ and $n=1$, we have
\begin{eqnarray*}
\mathscr{D}_{q,\omega}f(x)
=\frac{(\beta-\gamma (qx+\omega))^{-1}-(\beta-\gamma x)^{-1}}{(qx+\omega)-x}
=\frac{\gamma}{(\beta-\gamma x)(\beta-\gamma (qx+\omega))}.
\end{eqnarray*}
Since $(q;q)_1=1-q$, (\ref{h-derivate-qoutient}) holds for $n=1$.

Assuming the identity (\ref{h-derivate-qoutient}) holds for some $n\ge 2$,  we will prove it for $n+1$. To do this, the following relations will be useful:
\begin{equation}\label{q-number-Gasper-property}
\prod_{k=1}^n[k]_q=\prod_{k=0}^{n-1}[1+k]_q=(1-q)^{-n}(q;q)_n,
\end{equation}
which can be deduced from  \cite[Eq. (1.2.45)]{Gasper-Rahman-2004} and \cite[Eq. (1.2.47)]{Gasper-Rahman-2004}).
Then, by (\ref{q-number-Gasper-property}), establishing  (\ref{h-derivate-qoutient}) is equivalent to proving
\begin{equation*}
\mathscr{D}_{q,\omega}^{(n)}f(x)=\frac{\gamma^n(1-q)^{-n}(q;q)_n}{\prod_{i=0}^n(\beta-\gamma (q^{i}x+\omega[i]_q))}.
\end{equation*}

We denote $$f_n(x):=\frac{\gamma^n(1-q)^{-n}(q;q)_n}{\prod_{i=0}^n(\beta-\gamma (q^{i}x+\omega[i]_q))}.$$
Then, using the induction hypothesis and after some calculations, we find
\begin{align*}
&\mathscr{D}_{q,\omega}^{(n+1)}f(x)
=\mathscr{D}_{q,\omega}\mathscr{D}_{q,\omega}^{(n)}f(x)
=\mathscr{D}_{q,\omega}f_n(x)
=\frac{f_n(qx+\omega)-f_n(x)}{(q-1)x+\omega}\\
=&\gamma^n(1-q)^{-n}(q;q)_n\ \frac{\left(\prod_{i=0}^n(\beta-\gamma (q^{i}(qx+\omega)+\omega[i]_q))\right)^{-1}-\left(\prod_{i=0}^n(\beta-\gamma (q^{i}x+\omega[i]_q))\right)^{-1}}{(q-1)x+\omega}\\
=&\frac{\gamma^{n+1}(1-q)^{-n-1}(q;q)_{n+1}}{\prod_{i=0}^{n+1}(\beta-\gamma (q^{i}x+\omega[i]_q))},
\end{align*}
and this completes the proof.
\end{proof}

\bigskip

The following five lemmas are essential to obtain our main results in Section \ref{sect-lo}.

\begin{lemma}\label{h-lemma-1}
Let  $\{Q_n(x)\}_{n\geq0}$ and $\{P_n(x)\}_{n\geq0}$ be the sequences of monic  orthogonal polynomials  with respect to (\ref{inner-product}) and (\ref{standart-product}), respectively. Then,
\begin{equation}\label{relation-1}
r_c(x)Q_n(x)=f_{1,c,n}(x)P_n(x)+g_{1,c,n}(x)P_{n-1}(x),\qquad n\geq1,
\end{equation}
where
\begin{align}\label{r(x)}
r_c(x)&= \prod_{k=0}^{j}(x-q^kc-\omega[k]_q),\\ \label{f1}
f_{1,c,n}(x)&=r_c(x)-\frac{M\rho_{n,j,c}}{h_{n-1}}
\left(\sum_{i=0}^j\frac{(q;q)_j\mathscr{D}_{q,\omega}^{(i)}P_{n-1}\left(c\right)\prod_{k=0}^{i-1}(x-q^kc-\omega[k]_q)}
{(q;q)_{i}(1-q)^{j-i}}\right),\\  \label{g1}
g_{1,c,n}(x)&=\frac{M\rho_{n,j,c}}{h_{n-1}}\left(\sum_{i=0}^j\frac{(q;q)_j\mathscr{D}_{q,\omega}^{(i)}
P_{n}\left(c\right)\prod_{k=0}^{i-1}(x-q^kc-\omega[k]_q)} {(q;q)_{i}(1-q)^{j-i}}\right),
\end{align}
with $\rho_{n,j,c}:=\mathscr{D}_{q,\omega}^{(j)}Q_n(c)=\displaystyle{\frac{\mathscr{D}_{q,\omega}^{(j)}P_n(c)}{1+MK_{n-1}^{(j,j)}(c,c)}}.$
\end{lemma}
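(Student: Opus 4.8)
The plan is to start from the connection formula \eqref{qn-relation-first} and eliminate the kernel $K_{n-1}^{(0,j)}(x,c)$ in favour of the ordinary Christoffel--Darboux expression. Recall that
$$
K_{n-1}^{(0,j)}(x,c)=\sum_{i=0}^{n-1}\frac{P_i(x)\,\mathscr{D}_{q,\omega}^{(j)}P_i(c)}{h_i}=\left.\mathscr{D}_{q,\omega,y}^{(j)}K_{n-1}(x,y)\right|_{y=c},
$$
so I would apply the Hahn operator $\mathscr{D}_{q,\omega}$ exactly $j$ times, in the variable $y$, to the Christoffel--Darboux formula \eqref{Chri-Dar-for}, and then set $y=c$. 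Writing $K_{n-1}(x,y)=\dfrac{1}{h_{n-1}}\dfrac{P_n(x)P_{n-1}(y)-P_{n-1}(x)P_n(y)}{x-y}$, the $x$--dependence factors out, and the task reduces to computing $\mathscr{D}_{q,\omega}^{(j)}$ applied in $y$ to the two functions $\dfrac{P_{n-1}(y)}{x-y}$ and $\dfrac{P_n(y)}{x-y}$.

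The key computational step is the Leibniz rule \eqref{h-Leibniz} together with part 5 of the Proposition, formula \eqref{h-derivate-qoutient}. For fixed $x$, take $\beta=x$, $\gamma=1$ so that $f(y)=(x-y)^{-1}$ and
$$
\mathscr{D}_{q,\omega}^{(n-k)}f(y)=\frac{(q;q)_{n-k}}{(1-q)^{n-k}\prod_{\ell=0}^{n-k}\bigl(x-q^{\ell}y-\omega[\ell]_q\bigr)}.
$$
Applying \eqref{h-Leibniz} to $(x-y)^{-1}P_m(y)$ (for $m=n-1$ and $m=n$) and evaluating at $y=c$, every term acquires the product $\prod_{\ell=0}^{j-i}\bigl(x-q^{\ell}c-\omega[\ell]_q\bigr)$ in the denominator. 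Multiplying the whole connection formula through by $r_c(x)=\prod_{k=0}^{j}(x-q^kc-\omega[k]_q)$ clears all these denominators simultaneously, leaving a polynomial identity. The surviving sum over $i$ runs from $0$ to $j$, and the residual factor in the numerator after cancellation is precisely $\prod_{k=0}^{i-1}(x-q^kc-\omega[k]_q)$, which is exactly the shape appearing in \eqref{f1} and \eqref{g1}. Tracking the $q$--binomial coefficient $\binom{j}{i}_q q^{i(i-j)}$ against the factor $(q;q)_{n-k}/(1-q)^{n-k}$ from \eqref{h-derivate-qoutient} should collapse, after the identity \eqref{q-binomial}, to the coefficient $\dfrac{(q;q)_j}{(q;q)_i(1-q)^{j-i}}$ written in the lemma.

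Concretely, after these substitutions the connection formula becomes $r_c(x)Q_n(x)=r_c(x)P_n(x)-M\rho_{n,j,c}\cdot\dfrac{r_c(x)}{h_{n-1}}K_{n-1}^{(0,j)}(x,c)$, and the computation above shows $r_c(x)K_{n-1}^{(0,j)}(x,c)$ is a linear combination $\widetilde{g}(x)P_n(x)-\widetilde{f}(x)P_{n-1}(x)$ with $\widetilde f,\widetilde g$ the indicated sums; collecting the $P_n$ and $P_{n-1}$ terms gives $f_{1,c,n}$ and $g_{1,c,n}$ as stated. Finally I would note that $\rho_{n,j,c}=\mathscr{D}_{q,\omega}^{(j)}Q_n(c)$ equals $\mathscr{D}_{q,\omega}^{(j)}P_n(c)/(1+MK_{n-1}^{(j,j)}(c,c))$, which was already derived in Section 2 just above \eqref{qn-relation-first}, so the abbreviation is consistent.

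The main obstacle I anticipate is the bookkeeping in the Leibniz expansion: keeping the argument shifts $q^k y+\omega[k]_q$ straight when $f$ itself is being differentiated $n-k$ times (so its argument is shifted too), and then verifying that after setting $y=c$ and multiplying by $r_c(x)$ the index ranges and the leftover products line up exactly as $\prod_{k=0}^{i-1}(x-q^kc-\omega[k]_q)$ with the matching power of $(1-q)$ and ratio of $q$--Pochhammer symbols. A secondary point to be careful about is the two boundary terms $i=0$ (which produces the bare $r_c(x)$ in $f_{1,c,n}$, since $\mathscr{D}_{q,\omega}^{(0)}P_n=P_n$) and $i=j$; once these are checked, the rest is routine algebra, and no convergence or analytic subtleties arise because everything is a finite polynomial identity valid for all $x$ off the finitely many points $q^kc+\omega[k]_q$, hence everywhere by continuity.
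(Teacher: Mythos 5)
Your proposal is correct and follows essentially the same route as the paper's proof: expand $K_{n-1}^{(0,j)}(x,c)$ by applying the Leibniz formula (\ref{h-Leibniz}) together with (\ref{h-derivate-qoutient}) (with $\beta=x$, treating $(x-y)^{-1}$ as a function of $y$) to the Christoffel--Darboux formula (\ref{Chri-Dar-for}), evaluate at $y=c$, substitute into the connection formula (\ref{qn-relation-first}), and multiply through by $r_c(x)$. The only caveat is the bookkeeping point you yourself flag: since the Leibniz term involves the derivative of $(x-y)^{-1}$ taken at the shifted argument $q^iy+\omega[i]_q$, the $i$-th denominator is $\prod_{k=0}^{j-i}\bigl(x-q^{k+i}c-\omega[k+i]_q\bigr)=\prod_{k=i}^{j}\bigl(x-q^{k}c-\omega[k]_q\bigr)$, not $\prod_{\ell=0}^{j-i}\bigl(x-q^{\ell}c-\omega[\ell]_q\bigr)$ as written in your sketch, and it is precisely this shifted product whose cancellation against $r_c(x)$ leaves the factor $\prod_{k=0}^{i-1}(x-q^kc-\omega[k]_q)$ appearing in (\ref{f1}) and (\ref{g1}), the extra powers of $q$ from the shift cancelling against the $q^{i(i-j)}$ in (\ref{h-Leibniz}) exactly as in the paper's computation.
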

\begin{proof}
Using (\ref{Chri-Dar-for}), (\ref{derivatives-kernel}), and applying  (\ref{h-Leibniz})-(\ref{h-derivate-qoutient}), we deduce
\begin{align*}
&K_{n-1}^{(0, j)}(x, y)\\
=&\frac{1}{h_{n-1}}\left(P_n(x)\mathscr{D}_{q,\omega}^{(j)}
\frac{P_{n-1}(y)}{x-y}-P_{n-1}(x)\mathscr{D}_{q,\omega}^{(j)}\frac{P_{n}(y)}{x-y}\right)\\
=&\frac{1}{h_{n-1}}\left(P_n(x)\sum_{i=0}^j\qbinomial{j}{i}_qq^{i(i-j)}\mathscr{D}_{q,\omega}^{(i)}P_{n-1}\left(y\right)
\mathscr{D}_{q,\omega}^{(j-i)}(x-(q^{i}y+\omega[i]_q))^{-1}\right.\\
&\qquad\left.-P_{n-1}(x)\sum_{i=0}^j\qbinomial{j}{i}_qq^{i(i-j)}\mathscr{D}_{q,\omega}^{(i)}P_{n}\left(y\right)
\mathscr{D}_{q,\omega}^{(j-i)}(x-(q^{i}y+\omega[i]_q))^{-1}\right)\\
=&\frac{1}{h_{n-1}}\left(P_n(x)\sum_{i=0}^j\frac{(q;q)_j}{(q;q)_i}
\mathscr{D}_{q,\omega}^{(i)}P_{n-1}\left(y\right)\frac{1}{(1-q)^{j-i}\prod_{k=0}^{j-i}(x-q^{k+i}y-\omega[k+i]_q)}\right.\\
&\qquad\left.-P_{n-1}(x)\sum_{i=0}^j\frac{(q;q)_j}{(q;q)_i}
\mathscr{D}_{q,\omega}^{(i)}P_{n}\left(y\right)\frac{1}{(1-q)^{j-i}\prod_{k=0}^{j-i}(x-q^{k+i}y-\omega[k+i]_q)}\right).
\end{align*}
So, evaluating the variable $y$ at  point $c$,  and
 substituting it into relation (\ref{qn-relation-first}) we obtain

\begin{align*}
Q_n(x)
=&P_n(x)-M\rho_{n,j,c}K_{n-1}^{(0,j)}(x,c)\\
=&P_n(x)\left(1-\frac{M\rho_{n,j,c}}{h_{n-1}}\left(\sum_{i=0}^j\frac{(q;q)_j\mathscr{D}_{q,\omega}^{(i)}P_{n-1}\left(c\right)}
{(q;q)_{i}(1-q)^{j-i}\prod_{k=0}^{j-i}(x-q^{k+i}c-\omega[k+i]_q)}\right)\right)\\
+&P_{n-1}(x)\frac{M\rho_{n,j,c}}{h_{n-1}}\left(\sum_{i=0}^j\frac{(q;q)_j\mathscr{D}_{q,\omega}^{(i)}P_{n}\left(c\right)}
{(q;q)_{i}(1-q)^{j-i}\prod_{k=0}^{j-i}(x-q^{k+i}c-\omega[k+i]_q)}\right).
\end{align*}
Finally, multiplying by $r_c(x)=\prod_{k=0}^{j}(x-q^kc-\omega[k]_q)$,
\begin{align*}
&Q_n(x)\prod_{k=0}^{j}(x-q^kc-\omega[k]_q)\\
=&P_n(x)\left(\prod_{k=0}^{j}(x-q^kc-\omega[k]_q)-\frac{M\rho_{n,j,c}}{h_{n-1}}
\left(\sum_{i=0}^j\frac{(q;q)_j\mathscr{D}_{q,\omega}^{(i)}P_{n-1}\left(c\right)\prod_{k=0}^{j}(x-q^kc-\omega[k]_q)}
{(q;q)_{i}(1-q)^{j-i}\prod_{k=0}^{j-i}(x-q^{k+i}c-\omega[k+i]_q)}\right)\right)\\
+&P_{n-1}(x)\frac{M\rho_{n,j,c}}{h_{n-1}}\left(\sum_{i=0}^j\frac{(q;q)_j\mathscr{D}_{q,\omega}^{(i)}
P_{n}\left(c\right)\prod_{k=0}^{j}(x-q^kc-\omega[k]_q)}
{(q;q)_{i}(1-q)^{j-i}\prod_{k=0}^{j-i}(x-q^{k+i}c-\omega[k+i]_q)}\right),
\end{align*}
and simplifying we obtain the desired result.
\end{proof}

\begin{lemma}\label{h-lemma-2}
Let   $\{Q_n(x)\}_{n\geq0}$ and $\{P_n(x)\}_{n\geq0}$ be the sequences of monic orthogonal polynomials  with respect to (\ref{inner-product}) and (\ref{standart-product}), respectively. Then,
\begin{equation}\label{relation-2}
r_c(x)\mathscr{D}_{q,\omega}Q_n(x)=f_{2,c,n}(x)P_n(x)+g_{2,c,n}(x)P_{n-1}(x),\quad n\geq1,
\end{equation}
where
\begin{align}\nonumber
&f_{2,c,n}(x)=\frac{x-q^{j}c-\omega[j]_q}{q^{j+1}\left(x-q^{-1}c-\omega[-1]_q\right)}\left(\mathscr{D}_{q,\omega}
f_{1,c,n}(x)+f_{1,c,n}(qx+\omega)\frac{B_n(x)}{A(x)}\right.\\ \label{f2}
&\quad\left.-g_{1,c,n}(qx+\omega)\frac{C_{n-1}(x)}{\beta_{n-1}A(x)}-\frac{[j+1]_q}{\left(x-q^{j}c-\omega[j]_q\right)}f_{1,c,n}(x)\right),\\ \nonumber
&g_{2,c,n}(x)=\frac{x-q^{j}c-\omega[j]_q}{q^{j+1}\left(x-q^{-1}c-\omega[-1]_q\right)}
\left(\mathscr{D}_{q,\omega}\ g_{1,c,n}(x)+f_{1,c,n}(qx+\omega)\frac{C_n(x)}{A(x)}\right.\\ \label{g2}
&\quad\left.+g_{1,c,n}(qx+\omega)\left(\frac{B_{n-1}(x)}{A(x)}+\frac{(x-\alpha_{n-1})C_{n-1}(x)}{A(x)\beta_{n-1}}\right)
-\frac{[j+1]_q}{\left(x-q^{j}c-\omega[j]_q\right)}g_{1,c,n}(x)\right),
\end{align}
with $r_c(x)$, $f_{1,c,n}(x)$ and $g_{1,c,n}(x)$  given  by (\ref{r(x)})--(\ref{g1}).
\end{lemma}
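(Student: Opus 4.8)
The plan is to start from the first technical relation, Lemma~\ref{h-lemma-1}, namely $r_c(x)Q_n(x)=f_{1,c,n}(x)P_n(x)+g_{1,c,n}(x)P_{n-1}(x)$, and apply the Hahn operator $\mathscr{D}_{q,\omega}$ to both sides. Since $r_c(x)=\prod_{k=0}^{j}(x-q^kc-\omega[k]_q)$, the product rule (\ref{h-rule-product}) gives
$$\mathscr{D}_{q,\omega}\bigl(r_c(x)Q_n(x)\bigr)=Q_n(x)\,\mathscr{D}_{q,\omega}r_c(x)+r_c(qx+\omega)\,\mathscr{D}_{q,\omega}Q_n(x).$$
The key algebraic observation is that the shifted polynomial $r_c(qx+\omega)=\prod_{k=0}^{j}(qx+\omega-q^kc-\omega[k]_q)$ telescopes into $r_c(x)$ times explicit linear factors: concretely, $qx+\omega-q^kc-\omega[k]_q=q\bigl(x-q^{k-1}c-\omega[k-1]_q\bigr)$, so that $r_c(qx+\omega)=q^{j+1}\,\dfrac{x-q^{-1}c-\omega[-1]_q}{x-q^{j}c-\omega[j]_q}\,r_c(x)$. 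Similarly $\mathscr{D}_{q,\omega}r_c(x)$ should be computed explicitly (it is $[j+1]_q$ times a product of $j$ of the linear factors, up to the appropriate normalisation), yielding $\mathscr{D}_{q,\omega}r_c(x)=\dfrac{[j+1]_q}{x-q^{j}c-\omega[j]_q}\,r_c(x)$ after matching the factors. These two identities are exactly what produce the prefactor $\dfrac{x-q^{j}c-\omega[j]_q}{q^{j+1}(x-q^{-1}c-\omega[-1]_q)}$ and the subtracted term $\dfrac{[j+1]_q}{x-q^{j}c-\omega[j]_q}f_{1,c,n}(x)$ (resp. with $g_{1,c,n}$) in (\ref{f2})--(\ref{g2}).

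Next I would expand the right-hand side. Again by the product rule,
$$\mathscr{D}_{q,\omega}\bigl(f_{1,c,n}(x)P_n(x)+g_{1,c,n}(x)P_{n-1}(x)\bigr)=P_n(x)\mathscr{D}_{q,\omega}f_{1,c,n}(x)+f_{1,c,n}(qx+\omega)\mathscr{D}_{q,\omega}P_n(x)+P_{n-1}(x)\mathscr{D}_{q,\omega}g_{1,c,n}(x)+g_{1,c,n}(qx+\omega)\mathscr{D}_{q,\omega}P_{n-1}(x).$$
Now I invoke the structure relation (\ref{Low-rela-Pn}) to replace $A(x)\mathscr{D}_{q,\omega}P_n(x)=B_n(x)P_n(x)+C_n(x)P_{n-1}(x)$ and, for the index $n-1$, $A(x)\mathscr{D}_{q,\omega}P_{n-1}(x)=B_{n-1}(x)P_{n-1}(x)+C_{n-1}(x)P_{n-2}(x)$; then I use the three-term recurrence (\ref{ttrr}) in the form $P_{n-2}(x)=\bigl((x-\alpha_{n-1})P_{n-1}(x)-P_n(x)\bigr)/\beta_{n-1}$ to re-express $P_{n-2}$ in terms of $P_{n-1}$ and $P_n$. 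After dividing through by $A(x)$ and collecting the coefficients of $P_n(x)$ and $P_{n-1}(x)$ separately, one reads off precisely $f_{2,c,n}(x)$ and $g_{2,c,n}(x)$ as in (\ref{f2}) and (\ref{g2}); the term $f_{1,c,n}(qx+\omega)C_n(x)/A(x)$ in $g_{2,c,n}$ and the term $-g_{1,c,n}(qx+\omega)C_{n-1}(x)/(\beta_{n-1}A(x))$ in $f_{2,c,n}$ come from this substitution, while the $\mathscr{D}_{q,\omega}f_{1,c,n}$ and $\mathscr{D}_{q,\omega}g_{1,c,n}$ terms carry over directly. Finally, equating the two expressions for $\mathscr{D}_{q,\omega}(r_c(x)Q_n(x))$, moving the $Q_n(x)\mathscr{D}_{q,\omega}r_c(x)$ term to the right and substituting the Lemma~\ref{h-lemma-1} expression for $r_c(x)Q_n(x)$ (divided appropriately) to eliminate $Q_n(x)$, produces the claimed identity with the common prefactor $r_c(qx+\omega)^{-1}r_c(x)=\dfrac{x-q^{j}c-\omega[j]_q}{q^{j+1}(x-q^{-1}c-\omega[-1]_q)}$ factored out.

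The condition $n\ge 2$ is needed because the argument uses $P_{n-2}$ via the recurrence, hence the index-$(n-1)$ instance of (\ref{Low-rela-Pn}), which is valid for $n-1\ge 1$. The main obstacle I anticipate is purely bookkeeping: correctly tracking the $q$-shifts $x\mapsto qx+\omega$ through all four product-rule terms and verifying that the shifted linear factors in $r_c(qx+\omega)$ and in $\mathscr{D}_{q,\omega}r_c(x)$ combine to give exactly the stated rational prefactor and the $[j+1]_q$ correction terms; it is easy to be off by a power of $q$ or by one factor in the telescoping product. A clean way to control this is to first record, as a separate sublemma or displayed computation, the two closed forms for $r_c(qx+\omega)$ and $\mathscr{D}_{q,\omega}r_c(x)$ in terms of $r_c(x)$ and the endpoint linear factors, and only then assemble the pieces. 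No genuinely new idea beyond the product rule, the structure relation (\ref{Low-rela-Pn}), the three-term recurrence, and Lemma~\ref{h-lemma-1} is required.
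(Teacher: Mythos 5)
Your proposal is correct and follows essentially the same route as the paper's proof: apply the product rule (\ref{h-rule-product}) to the relation of Lemma \ref{h-lemma-1}, rewrite $\mathscr{D}_{q,\omega}r_c(x)$ and $r_c(qx+\omega)$ as explicit multiples of $r_c(x)$, use (\ref{Low-rela-Pn}) at indices $n$ and $n-1$ together with (\ref{ttrr-n-2}) for $P_{n-2}$, and eliminate $Q_n(x)$ by reusing Lemma \ref{h-lemma-1}. The two closed forms you isolate for $\mathscr{D}_{q,\omega}r_c(x)$ and $r_c(qx+\omega)$ are exactly the identities (\ref{eqA}) and (\ref{eqB}) in the paper's argument, so only the routine bookkeeping you already describe remains.
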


\begin{proof}
Applying the operator $\mathscr{D}_{q,\omega}$  to (\ref{relation-1}) in Lemma \ref{h-lemma-1}, we get
\begin{eqnarray}\nonumber
&&Q_n(x)\mathscr{D}_{q,\omega} r_c(x)
+r_c(qx+\omega)\mathscr{D}_{q,\omega}Q_n(x)\\ \nonumber
&=&P_n(x)\mathscr{D}_{q,\omega}f_{1,c,n}(x)+f_{1,c,n}(qx+\omega)\mathscr{D}_{q,\omega}P_n(x)\\ \label{need-lemma-2}
&+&P_{n-1}(x)\mathscr{D}_{q,\omega}g_{1,c,n}(x)+g_{1,c,n}(qx+\omega)\mathscr{D}_{q,\omega}P_{n-1}(x).
\end{eqnarray}

We are going to  compute  some terms in the previous relation starting with $\mathscr{D}_{q,\omega}r_c(x).$
\begin{align}\nonumber
\mathscr{D}_{q,\omega}r_c(x)&=\frac{r_c(qx+\omega)-r_c(x)}{(q-1)x+\omega}\\ \nonumber
&=\frac{q^{j+1}\prod_{k=0}^{j}(x-q^{k-1}c-\omega [k-1]_q)-\prod_{k=0}^{j}(x-q^kc-\omega[k]_q)}{(q-1)x+\omega}\\ \label{eqA}
&=[j+1]_q\prod_{k=0}^{j-1}(x-q^kc-\omega[k]_q)=\frac{[j+1]_q}{x-q^{j}c-\omega[j]_q}r_c(x).
\end{align}
Next, we express  $r_c(qx+\omega)=\prod_{k=0}^{j}(qx+\omega-q^{k}c-\omega[k]_q)$ in terms of $r_c(x)$ as follows,
\begin{align} \nonumber
\prod_{k=0}^{j}(qx+\omega-q^{k}c-\omega[k]_q)
&=q^{j+1}\prod_{k=0}^{j}(x-q^{k-1}c-\omega[k-1]_q)\\  \label{eqB}
&=q^{j+1}\frac{x-q^{-1}c-\omega[-1]_q}{x-q^{j}c-\omega[j]_q}r_c(x).
\end{align}
So, using (\ref{eqA}) and (\ref{eqB}), the relation (\ref{need-lemma-2}) can be rewritten as
\begin{eqnarray*}
&&\frac{[j+1]_q}{\left(x-q^{j}c-\omega[j]_q\right)}r_c(x)Q_n(x)
+q^{j+1}\frac{x-q^{-1}c-\omega[-1]_q}{x-q^{j}c-\omega[j]_q}r_c(x)\mathscr{D}_{q,\omega}Q_n(x)\\
&=&P_n(x)\mathscr{D}_{q,\omega}f_{1,c,n}(x)+f_{1,c,n}(qx+\omega)\mathscr{D}_{q,\omega}P_n(x)\\
&+&P_{n-1}(x)\mathscr{D}_{q,\omega}g_{1,c,n}(x)+g_{1,c,n}(qx+\omega)\mathscr{D}_{q,\omega}P_{n-1}(x).
\end{eqnarray*}
Then,
\begin{align*}
&r_c(x)\mathscr{D}_{q,\omega}Q_n(x)\\
&=\frac{x-q^{j}c-\omega[j]_q}{q^{j+1}\left(x-q^{-1}c-\omega[-1]_q\right)}P_n(x)\mathscr{D}_{q,\omega}f_{1,c,n}(x) \\
&+\frac{x-q^{j}c-\omega[j]_q}{q^{j+1}\left(x-q^{-1}c-\omega[-1]_q\right)}f_{1,c,n}(qx+\omega)\mathscr{D}_{q,\omega}P_n(x)\\
&+\frac{x-q^{j}c-\omega[j]_q}{q^{j+1}\left(x-q^{-1}c-\omega[-1]_q\right)}P_{n-1}(x)\mathscr{D}_{q,\omega}g_{1,c,n}(x)\\
&+\frac{x-q^{j}c-\omega[j]_q}{q^{j+1}\left(x-q^{-1}c-\omega[-1]_q\right)}g_{1,c,n}(qx+\omega)\mathscr{D}_{q,\omega}P_{n-1}(x)\\
&-\frac{x-q^{j}c-\omega[j]_q}{q^{j+1}\left(x-q^{-1}c-\omega[-1]_q\right)}
\frac{[j+1]_q}{\left(x-q^{j}c-\omega[j]_q\right)}\left(f_{1,c,n}(x)P_n(x)+g_{1,c,n}(x)P_{n-1}(x)\right).
\end{align*}

Now, from (\ref{ttrr}) we can observe
\begin{equation}\label{ttrr-n-2}
P_{n-2}(x)=\frac{(x-\alpha_{n-1})P_{n-1}(x)-P_n(x)}{\beta_{n-1}}, \quad n\geq 2.
\end{equation}

Therefore, using (\ref{ttrr-n-2}) and doing some algebraic manipulations, we obtain the result.
\end{proof}

\begin{lemma}\label{h-lemma-3}
Let  $\{Q_n(x)\}_{n\geq0}$ and $\{P_n(x)\}_{n\geq0}$ be the sequences of monic  orthogonal polynomials  with respect to (\ref{inner-product}) and (\ref{standart-product}), respectively. Then,
\begin{equation}\label{relation-3}
r_c(x) Q_{n-1}(x)=f_{3,c,n}(x)P_n(x)+g_{3,c,n}(x)P_{n-1}(x),\qquad n\geq1,
\end{equation}
where
\begin{eqnarray*}\label{f3}
f_{3,c,n}(x)&=&-\frac{g_{1,c,n-1}(x)}{\beta_{n-1}} ,\\ \label{g3}
g_{3,c,n}(x)&=&f_{1,c,n-1}(x)+\frac{(x-\alpha_{n-1})g_{1,c,n-1}(x)}{\beta_{n-1}}.
\end{eqnarray*}
\end{lemma}

\begin{proof}
It is enough to use Lemma  \ref{h-lemma-1} and  (\ref{ttrr-n-2}) to obtain
\begin{align*}
r_c(x)Q_{n-1}(x)
=&f_{1,c,n-1}(x)P_{n-1}(x)+g_{1,c,n-1}(x)P_{n-2}(x)\\
=&-\frac{g_{1,c,n-1}(x)}{\beta_{n-1}}P_n(x)+\left(f_{1,c,n-1}(x)+\frac{(x-\alpha_{n-1})g_{1,c,n-1}(x)}{\beta_{n-1}}\right)P_{n-1}(x),
\end{align*}
where $f_{1,c,n}(x)$ and $g_{1,c,n}(x)$ are defined in   (\ref{f1}) and (\ref{g1}), respectively.
\end{proof}

\begin{lemma}\label{h-lemma-4}
Let  $\{Q_n(x)\}_{n\geq0}$ and $\{P_n(x)\}_{n\geq0}$ be the sequences of monic  orthogonal polynomials  with respect to (\ref{inner-product}) and (\ref{standart-product}), respectively. Then,
\begin{equation*}\label{relation-4}
r_c(x)\mathscr{D}_{q,\omega}Q_{n-1}(x)=f_{4,c,n}(x)P_n(x)+g_{4,c,n}(x)P_{n-1}(x),\quad n\geq2,
\end{equation*}
where
\begin{eqnarray*}\label{f4}
f_{4,c,n}(x)&=&-\frac{g_{2,c,n-1}(x)}{\beta_{n-1}} ,\\ \label{g4}
g_{4,c,n}(x)&=&f_{2,c,n-1}(x)+\frac{(x-\alpha_{n-1})g_{2,c,n-1}(x)}{\beta_{n-1}}.
\end{eqnarray*}
\end{lemma}

\begin{proof}
The proof is identical to the one  in  Lemma \ref{h-lemma-3}, but now using (\ref{relation-2})-(\ref{g2}).
\end{proof}

We define the following functions
\begin{equation} \label{def-phi}\varphi_{c,n}^{i,j}(x):=f_{i,c,n}(x)g_{j,c,n}(x)-f_{j,c,n}(x)g_{i,c,n}(x), \qquad i,j\in\{1,2,3,4\},
\end{equation}
where  $f_{i,c,n}(x)$ and $g_{i,c,n}(x),$  $i\in\{1,2,3,4\},$ are defined in Lemmas \ref{h-lemma-1}-\ref{h-lemma-4}.

\begin{lemma}\label{h-lemma-5}
For $n\geq2$, we have
\begin{align}\label{f5}
\varphi_{c,n}^{1,3}(x)P_n(x)&=r_c(x)\big(g_{3,c,n}(x)Q_n(x)-g_{1,c,n}(x)Q_{n-1}(x)\big),\\ \label{g5}
\varphi_{c,n}^{1,3}(x)P_{n-1}(x)&=r_c(x)\big(f_{1,c,n}(x)Q_{n-1}(x)-f_{3,c,n}(x)Q_n(x)\big),
\end{align}
where the function $\varphi_{c,n}^{1,3}$ is given by (\ref{def-phi}) and the other functions are defined in  previous lemmas.
\end{lemma}

\begin{proof}
The functions $f_{1,c,n}(x)$, $g_{1,c,n}(x)$, $f_{3,c,n}(x)$ and $g_{3,c,n}(x)$ are polynomials of degree at most $j+1$ in the variable $x$, then we multiply   (\ref{relation-1}) by  $g_{3,c,n}(x)$ and  (\ref{relation-3}) by $-g_{1,c,n}(x)$ obtaining
\begin{eqnarray*}
r_c(x)g_{3,c,n}(x)Q_n(x)&=&f_{1,c,n}(x)g_{3,c,n}(x)P_n(x)+g_{1,c,n}(x)g_{3,c,n}(x)P_{n-1}(x),\\
-r_c(x)g_{1,c,n}(x)Q_{n-1}(x)&=&-f_{3,c,n}(x)g_{1,c,n}(x)P_n(x)-g_{3,c,n}(x)g_{1,c,n}(x)P_{n-1}(x).
\end{eqnarray*}
Adding both expressions we deduce (\ref{f5}). Equation (\ref{g5}) is obtained in the same way using appropriate functions.
\end{proof}

\section{Ladder operators and a second--order differential--difference equation for $Q_n(x)$} \label{sect-lo}

We are ready to obtain a second--order linear differential--difference equation satisfied by the orthogonal  polynomials $Q_n(x)$ with respect to the inner product (\ref{inner-product}). The first step is to obtain the ladder operators for this family of polynomials. To do this, we will use the key Lemmas  \ref{h-lemma-1}--\ref{h-lemma-5} obtained in the previous section.

\begin{theorem}\label{theo1}
\textbf{(Ladder Operators)}
Let  $\{Q_n(x)\}_{n\ge0}$  be a sequence of monic  orthogonal polynomials  with respect to
$$(f,g)_S=\int f(x)g(x)\varrho(x)dx+M\mathscr{D}_{q,\omega}^{(j)}f(c)\mathscr{D}_{q,\omega}^{(j)}g(c).$$
The ladder operators   $\Phi_n$ and  $\widehat{\Phi}_n$ defined by
\begin{eqnarray*}
\Phi_n&:=&\varphi_{c,n}^{3,2}(x)+\varphi_{c,n}^{1,3}(x)\mathscr{D}_{q,\omega},\qquad n\geq2,\\
\widehat{\Phi}_n&:=&\varphi_{c,n}^{1,4}(x)-\varphi_{c,n}^{1,3}(x)\mathscr{D}_{q,\omega},\qquad n\geq2,
\end{eqnarray*}
satisfy
\begin{eqnarray}\label{h-lowering}
\Phi_nQ_n(x)&=&\varphi_{c,n}^{1,2}(x)Q_{n-1}(x),\\ \label{h-raising}
\widehat{\Phi}_nQ_{n-1}(x)&=&\varphi_{c,n}^{3,4}(x)Q_n(x),
\end{eqnarray}
where the functions $\varphi_{c,n}^{i,j}(x)$ are given by (\ref{def-phi}).
\end{theorem}
\begin{proof}
To prove (\ref{h-lowering}), we first multiply  (\ref{relation-2}) by $\varphi_{c,n}^{1,3}(x).$ Then, we use (\ref{f5})--(\ref{g5}) obtaining
\begin{eqnarray*}
r_c(x)\varphi_{c,n}^{1,3}(x)\mathscr{D}_{q,\omega}Q_{n}(x)
&=&f_{2,c,n}(x)r_c(x)(g_{3,c,n}(x)Q_n(x)-g_{1,c,n}(x)Q_{n-1}(x))\\
&+&g_{2,c,n}(x)r_c(x)(-f_{3,c,n}(x)Q_n(x)+f_{1,c,n}(x)Q_{n-1}(x)).
\end{eqnarray*}
Simplifying and using (\ref{def-phi}), we find
\begin{eqnarray*}
\varphi_{c,n}^{1,3}(x)\mathscr{D}_{q,\omega}Q_{n}(x)
&=& \varphi_{c,n}^{2,3}(x) Q_n(x)+\varphi_{c,n}^{1,2}(x) Q_{n-1}(x).
\end{eqnarray*}
 Taking into account  $\varphi_{c,n}^{3,2}(x)=-\varphi_{c,n}^{2,3}(x),$ we deduce (\ref{h-lowering}). The proof of (\ref{h-raising}) is completely analogous. Therefore, we have omitted it.
\end{proof}

\medskip

Finally, we  establish the following statement.
\begin{theorem}\label{theo2} The $n$-th monic orthogonal polynomials $Q_n(x)$ satisfy the following second--order linear differential--difference equation
\begin{equation*}
\sigma_{1,c,n}(x)\mathscr{D}_{q,\omega}^{(2)}Q_n(x)+\sigma_{2,c,n}(x)\mathscr{D}_{q,\omega}Q_n(x)+\sigma_{3,c,n}(x)Q_n(x)=0, \quad n\geq2,
\end{equation*}
where
\begin{eqnarray*}
\sigma_{1,c,n}(x)&=&\varphi_{c,n}^{1,3}(x)\varphi_{c,n}^{1,3}(qx+\omega)\varphi_{c,n}^{1,2}(x),\\
\sigma_{2,c,n}(x)&=&\varphi_{c,n}^{1,3}(x)\bigg(\varphi_{c,n}^{1,2}(x)\left(\varphi_{c,n}^{3,2}(qx+\omega)+\mathscr{D}_{q,\omega}
\varphi_{c,n}^{1,3}(x)\right)\\
&-&\varphi_{c,n}^{1,2}(qx+\omega)\varphi_{c,n}^{1,4}(x)
-\varphi_{c,n}^{1,3}(x)\mathscr{D}_{q,\omega}\varphi_{c,n}^{1,2}(x)\bigg),\\
\sigma_{3,c,n}(x)&=&\varphi_{c,n}^{1,2}(qx+\omega)\left(\varphi_{c,n}^{1,2}(x)\varphi_{c,n}^{3,4}(x)-\varphi_{c,n}^{1,4}(x)\varphi_{c,n}^{3,2}(x)\right)\\
&+&\varphi_{c,n}^{1,3}(x)\left(\varphi_{c,n}^{1,2}(x)\mathscr{D}_{q,\omega}\varphi_{c,n}^{3,2}(x)-\varphi_{c,n}^{3,2}(x)
\mathscr{D}_{q,\omega}\varphi_{c,n}^{1,2}(x)\right).
\end{eqnarray*}
\end{theorem}

\begin{proof}
Once we know the corresponding ladder operators given in Theorem \ref{theo1}, we proceed as follows. From (\ref{h-raising}) we have $$\widehat{\Phi}_nQ_{n-1}(x)=\varphi_{c,n}^{3,4}(x)Q_n(x),$$ and using (\ref{h-lowering}) we find $$Q_{n-1}(x)=\frac{\Phi_nQ_n(x)}{\varphi_{c,n}^{1,2}(x)},$$ so, we obtain
\begin{equation} \label{sect4-f1}
\widehat{\Phi}_n\frac{\Phi_nQ_n(x)}{\varphi_{c,n}^{1,2}(x)}=\varphi_{c,n}^{3,4}(x)Q_n(x).
\end{equation}
Now, we analyze the left--hand side in the above expression getting
\begin{align*}
\widehat{\Phi}_n\frac{\Phi_nQ_n(x)}{\varphi_{c,n}^{1,2}(x)}
&=\frac{\varphi_{c,n}^{1,4}(x)}{\varphi_{c,n}^{1,2}(x)}\Phi_nQ_n(x)-\varphi_{c,n}^{1,3}(x)\mathscr{D}_{q,\omega}
\frac{\Phi_nQ_n(x)}{\varphi_{c,n}^{1,2}(x)}\\
&=\frac{\varphi_{c,n}^{1,4}(x)}{\varphi_{c,n}^{1,2}(x)}\left(\varphi_{c,n}^{3,2}(x)Q_n(x)+\varphi_{c,n}^{1,3}(x)\mathscr{D}_{q,\omega}Q_n(x)\right)\\
&\quad-\varphi_{c,n}^{1,3}(x)\left( \mathscr{D}_{q,\omega}\frac{\varphi_{c,n}^{3,2}(x)Q_n(x)}{\varphi_{c,n}^{1,2}(x)}
+\mathscr{D}_{q,\omega}\frac{\varphi_{c,n}^{1,3}(x)\mathscr{D}_{q,\omega}Q_n(x)}{\varphi_{c,n}^{1,2}(x)}\right).
\end{align*}
We use (\ref{h-rule-product}-\ref{h-quotient-rule}) to compute $\displaystyle \mathscr{D}_{q,\omega}\frac{\varphi_{c,n}^{3,2}(x)Q_n(x)}{\varphi_{c,n}^{1,2}(x)}$ and $\displaystyle \mathscr{D}_{q,\omega}\frac{\varphi_{c,n}^{1,3}(x)\mathscr{D}_{q,\omega}Q_n(x)}{\varphi_{c,n}^{1,2}(x)}$,
then substituting them  into (\ref{sect4-f1}), we deduce
\begin{eqnarray*}
\varphi_{c,n}^{3,4}(x)Q_n(x)&=&\frac{\varphi_{c,n}^{1,4}(x)}{\varphi_{c,n}^{1,2}(x)}\left(\varphi_{c,n}^{3,2}(x)Q_n(x)+\varphi_{c,n}^{1,3}(x)\mathscr{D}_{q,\omega}Q_n(x)\right)\\
&-&\varphi_{c,n}^{1,3}(x)
\left(
\frac{\varphi_{c,n}^{1,2}(x)\mathscr{D}_{q,\omega}\varphi_{c,n}^{3,2}(x)-\varphi_{c,n}^{3,2}(x)\mathscr{D}_{q,\omega}\varphi_{c,n}^{1,2}(x)}
{\varphi_{c,n}^{1,2}(x)\varphi_{c,n}^{1,2}(qx+\omega)}Q_n(x)\right.\\
&+&\frac{\varphi_{c,n}^{3,2}(qx+\omega)}{\varphi_{c,n}^{1,2}(qx+\omega)}\mathscr{D}_{q,\omega}Q_n(x)\\
&+&\frac{\varphi_{c,n}^{1,2}(x)\mathscr{D}_{q,\omega}\varphi_{c,n}^{1,3}(x)-\varphi_{c,n}^{1,3}(x)\mathscr{D}_{q,\omega}
\varphi_{c,n}^{1,2}(x)}
{\varphi_{c,n}^{1,2}(x)\varphi_{c,n}^{1,2}(qx+\omega)}\mathscr{D}_{q,\omega}Q_n(x)\\
&+&\left.\frac{\varphi_{c,n}^{1,3}(qx+\omega)}{\varphi_{c,n}^{1,2}(qx+\omega)}\mathscr{D}_{q,\omega}^{(2)}Q_n(x)
\right).
\end{eqnarray*}
Finally, to deduce the second--order differential--difference equation for the nonstandard polynomials $Q_n,$ it only remains to multiply the previous expression by $\varphi_{c,n}^{1,2}(x)\varphi_{c,n}^{1,2}(qx+\omega)$ and simplify.
\end{proof}

\bigskip

Summarizing, we have obtained ladder operators for a wide family of Sobolev--type orthogonal polynomials with respect to an inner product involving the Hahn discrete operator. Furthermore, we have proved that these families of nonstandard orthogonal polynomials satisfy a second--order differential--difference equation whose polynomial coefficients can be computed explicitly. For this reason, we have included three appendices with all the details corresponding to the operators $\mathscr{D}_{q}$, $\Delta$ and $\frac{d}{dx}.$ The formulas given in these appendices can be implemented in a computer program which will be useful in practice. We will discuss this in a forthcoming paper.

\section*{Disclosure statement}

The authors declare that they have no competing interests.

\section*{Authors' contributions}

The authors have contributed equally to the writing of this paper.

\section*{Acknowledgments}

We thank the referees for their constructive suggestions and remarks. Undoubtedly, their comments have improved this paper.

\section*{Funding}

\noindent The authors  are partially supported by the Ministry of Science, Innovation, and
Universities of Spain and the European Regional Development Fund (ERDF), grant MTM2017-89941-P; they are
also partially supported by ERDF and Consejer\'{\i}a de Econom\'{\i}a, Conocimiento, Empresas y Universidad de la
Junta de Andaluc\'{\i}a (grant UAL18-FQM-B025-A) and by Research Group FQM-0229 (belonging to Campus of
International Excellence CEIMAR). The author J.J.M-B. is also partially supported by the research centre CDTIME
of Universidad de Almer\'{\i}a and by Junta de Andaluc\'{\i}a and ERDF, Ref. SOMM17/6105/UGR.
The author G.F. acknowledges the support of the National Science Center (Poland) via grant OPUS2017/25/B/BST1/00931.

\newpage
\section*{Appendix A. The $\mathscr{D}_{q}$ difference operator}

 The $\mathscr{D}_{q}$ difference operator is obtained by taking $\omega = 0$ in the expression (\ref{hahn-operator}) of the Hahn difference operator $\mathscr{D}_{q,\omega}.$  We provide explicit expressions for all the functions involved in the construction of the ladder operators (Theorem \ref{theo1}) for the discrete Sobolev orthogonal polynomials $Q_n$ as well as for the coefficients of the second--order difference equation satisfied by $Q_n.$  All these expressions can be computed using only the standard polynomials $P_n, $ and their properties such as relation (\ref{Low-rela-Pn}). We proceed in a similar way in the following two appendices.
{\small
\begin{align*}
\rho_{n,j,c}&=\frac{\mathscr{D}_{q}^{(j)}P_n(c)}{1+MK_{n-1}^{(j,j)}(c,c)},\\
r_c(x)&=\prod_{k=0}^{j}(x-q^kc),\\
f_{1,c,n}(x)&=r_c(x)-\frac{M\rho_{n,j,c}}{h_{n-1}}
\left(\sum_{i=0}^j\frac{(q;q)_j\mathscr{D}_{q}^{(i)}P_{n-1}\left(c\right)\prod_{k=0}^{i-1}(x-q^kc)}
{(q;q)_{i}(1-q)^{j-i}}\right),\\
g_{1,c,n}(x)&=\frac{M\rho_{n,j,c}}{h_{n-1}}\left(\sum_{i=0}^j\frac{(q;q)_j\mathscr{D}_{q}^{(i)}
P_{n}\left(c\right)\prod_{k=0}^{i-1}(x-q^kc)} {(q;q)_{i}(1-q)^{j-i}}\right),\\
f_{2,c,n}(x)&=\frac{x-q^{j}c}{q^{j+1}\left(x-q^{-1}c\right)}\left(\mathscr{D}_{q}f_{1,c,n}(x)+f_{1,c,n}(qx)\frac{B_n(x)}{A(x)}
-g_{1,c,n}(qx)\frac{C_{n-1}(x)}{\beta_{n-1}A(x)}\right) \\
&  \quad-\frac{[j+1]_q}{q^{j+1}\left(x-q^{-1}c\right)}f_{1,c,n}(x),\\
g_{2,c,n}(x)&=\frac{x-q^{j}c}{q^{j+1}\left(x-q^{-1}c\right)}\left(\mathscr{D}_{q}g_{1,c,n}(x)+f_{1,c,n}(qx)\frac{C_n(x)}{A(x)}\right.\\
&\quad\left.+g_{1,c,n}(qx)\left(\frac{B_{n-1}(x)}{A(x)}+\frac{(x-\alpha_{n-1})C_{n-1}(x)}{A(x)\beta_{n-1}}\right)
-\frac{[j+1]_q}{\left(x-q^{j}c\right)}g_{1,c,n}(x)\right),\\
f_{3,c,n}(x)&=-\frac{g_{1,c,n-1}(x)}{\beta_{n-1}} ,\\
g_{3,c,n}(x)&=f_{1,c,n-1}(x)+\frac{(x-\alpha_{n-1})g_{1,c,n-1}(x)}{\beta_{n-1}},\\
f_{4,c,n}(x)&=-\frac{g_{2,c,n-1}(x)}{\beta_{n-1}},\\
g_{4,c,n}(x)&=f_{2,c,n-1}(x)+\frac{(x-\alpha_{n-1})g_{2,c,n-1}(x)}{\beta_{n-1}},\\
\sigma_{1,c,n}(x)&=\varphi_{c,n}^{1,3}(x)\varphi_{c,n}^{1,3}(qx)\varphi_{c,n}^{1,2}(x),\\
\sigma_{2,c,n}(x)&=\varphi_{c,n}^{1,3}(x)\bigg(\varphi_{c,n}^{1,2}(x)\big(\varphi_{c,n}^{3,2}(qx)+\mathscr{D}_{q}\varphi_{c,n}^{1,3}(x)\big)
-\varphi_{c,n}^{1,2}(qx)\varphi_{c,n}^{1,4}(x)-\varphi_{c,n}^{1,3}(x)\mathscr{D}_{q}\varphi_{c,n}^{1,2}(x)\bigg),\\
\sigma_{3,c,n}(x)&=\varphi_{c,n}^{1,2}(qx)\big(\varphi_{c,n}^{1,2}(x)\varphi_{c,n}^{3,4}(x)-\varphi_{c,n}^{1,4}(x)\varphi_{c,n}^{3,2}(x)\big)\\
&\quad+\varphi_{c,n}^{1,3}(x)\left(\varphi_{c,n}^{1,2}(x)\mathscr{D}_{q}\varphi_{c,n}^{3,2}(x)-\varphi_{c,n}^{3,2}(x)
\mathscr{D}_{q}\varphi_{c,n}^{1,2}(x)\right),
\end{align*}
where $$\varphi_{c,n}^{i,j}(x)=f_{i,c,n}(x)g_{j,c,n}(x)-f_{j,c,n}(x)g_{i,c,n}(x), \qquad i,j\in\{1,2,3,4\}.
$$
}

\newpage
\section*{Appendix B. The forward difference operator}

 The forward difference operator, $\Delta$,  is obtained by taking $ \omega =1 $ and $q=1$ in the expression (\ref{hahn-operator}) of the Hahn difference operator $\mathscr{D}_{q,\omega}.$
Taking into account  (see \cite{Koekoek-book-hyper})
\begin{equation*}
\lim_{q\to1}[a]_q=a,\qquad \qquad
\lim_{q\to1}\frac{(q^a;q)_n}{(1-q)^a}=(a)_n,
\end{equation*}
 we obtain,

{\small
\begin{align*}
\rho_{n,j,c}&=\frac{\Delta^{(j)}P_n(c)}{1+MK_{n-1}^{(j,j)}(c,c)},\\
r_c(x)&=\prod_{k=0}^{j}(x-c-k)=(x-c-j)_{j+1},\\
f_{1,c,n}(x)&=r_c(x)-\frac{M\rho_{n,j,c}}{h_{n-1}}\left(\sum_{i=0}^{j} \frac{j!\Delta^{i}P_{n-1}(c)\prod_{k=0}^{i-1}(x-c-k)}{i!}\right),\\
g_{1,c,n}(x)&=\frac{M\rho_{n,j,c}}{h_{n-1}}\left(\sum_{i=0}^{j} \frac{j!\Delta^{i}P_{n}(c)\prod_{k=0}^{i-1}(x-c-k)}{i!}\right),\\
f_{2,c,n}(x)&=\frac{x-c-j}{x-c+1}\left(\Delta f_{1,c,n}(x)-g_{1,c,n}(x+1)\frac{C_{n-1}(x)}{A(x)\beta_{n-1}}+
\frac{B_{n}(x)}{A(x)}f_{1,c,n}(x+1)\right)\\
&\quad-\frac{j+1}{x-c+1}f_{1,c,n}(x),\\
g_{2,c,n}(x)&=\frac{x-c-j}{x-c+1}\left(\Delta g_{1,c,n}(x)+f_{1,c,n}(x+1)\frac{C_n(x)}{A(x)}\right.\\
&\quad\left.+g_{1,c,n}(x+1)\left(\frac{B_{n-1}(x)}{A(x)}+\frac{(x-\alpha_{n-1})C_{n-1}(x)}{A(x)\beta_{n-1}}\right)\right)-\frac{j+1}{x-c+1}g_{1,c,n}(x),\\
f_{3,c,n}(x)&=-\frac{g_{1,c,n-1}(x)}{\beta_{n-1}} ,\\
g_{3,c,n}(x)&=f_{1,c,n-1}(x)+\frac{(x-\alpha_{n-1})g_{1,c,n-1}(x)}{\beta_{n-1}},\\
f_{4,c,n}(x)&=-\frac{g_{2,c,n-1}(x)}{\beta_{n-1}},\\
g_{4,c,n}(x)&=f_{2,c,n-1}(x)+\frac{(x-\alpha_{n-1})g_{2,c,n-1}(x)}{\beta_{n-1}},\\
\sigma_{1,c,n}(x)&=\varphi_{c,n}^{1,3}(x)\varphi_{c,n}^{1,3}(x+1)\varphi_{c,n}^{1,2}(x),\\
\sigma_{2,c,n}(x)&=\varphi_{c,n}^{1,3}(x)\bigg(\varphi_{c,n}^{1,2}(x)\big(\varphi_{c,n}^{3,2}(x+1)+\Delta \varphi_{c,n}^{1,3}(x)\big)-\varphi_{c,n}^{1,2}(x+1)\varphi_{c,n}^{1,4}(x)
-\varphi_{c,n}^{1,3}(x)\Delta\varphi_{c,n}^{1,2}(x)\bigg),\\
\sigma_{3,c,n}(x)&=\varphi_{c,n}^{1,2}(x+1)\big(\varphi_{c,n}^{1,2}(x)\varphi_{c,n}^{3,4}(x)-\varphi_{c,n}^{1,4}(x)\varphi_{c,n}^{3,2}(x)\big)
+\varphi_{c,n}^{1,3}(x)\left(\varphi_{c,n}^{1,2}(x)\Delta \varphi_{c,n}^{3,2}(x) -\varphi_{c,n}^{3,2}(x)\Delta\varphi_{c,n}^{1,2}(x)\right),
\end{align*}
where $$\varphi_{c,n}^{i,j}(x)=f_{i,c,n}(x)g_{j,c,n}(x)-f_{j,c,n}(x)g_{i,c,n}(x), \qquad i,j\in\{1,2,3,4\}.
$$
}

\newpage

\section*{Appendix C. The derivative operator}

The derivative operator is obtained as the limiting case of the Hahn difference operator $\mathscr{D}_{q,\omega}$ when  $ \omega =0 $ and $q\to 1.$ Then, we find:

{\small
\begin{align*}
\rho_{n,j,c}&=\frac{P_n^{(j)}(c)}{1+MK_{n-1}^{(j,j)}(c,c)},\\
r_c(x)&=(x-c)^{j+1},\\
f_{1,c,n}(x)&=r_c(x)-\frac{M\rho_{n,j,c}}{h_{n-1}}\left(\sum_{i=0}^{j} \frac{j!P_{n-1}^{(i)}(c)(x-c)^{i}}{i!}\right),\\
g_{1,c,n}(x)&=\frac{M\rho_{n,j,c}}{h_{n-1}}\left(\sum_{i=0}^{j} \frac{j!P_{n}^{(i)}(c)(x-c)^{i}}{i!}\right),\\
f_{2,c,n}(x)&=f'_{1,c,n}(x)-g_{1,c,n}(x)\frac{C_{n-1}(x)}{A(x)\beta_{n-1}}+f_{1,c,n}(x)\left(\frac{B_{n}(x)}{A(x)}-\frac{j+1}{x-c}\right),\\
g_{2,c,n}(x)&=g'_{1,c,n}(x)+f_{1,c,n}(x)\frac{C_n(x)}{A(x)}
+g_{1,c,n}(x)\left(\frac{B_{n-1}(x)}{A(x)}+\frac{(x-\alpha_{n-1})C_{n-1}(x)}{A(x)\beta_{n-1}}-\frac{j+1}{x-c}\right),\\
f_{3,c,n}(x)&=-\frac{g_{1,c,n-1}(x)}{\beta_{n-1}} ,\\
g_{3,c,n}(x)&=f_{1,c,n-1}(x)+\frac{(x-\alpha_{n-1})g_{1,c,n-1}(x)}{\beta_{n-1}},\\
f_{4,c,n}(x)&=-\frac{g_{2,c,n-1}(x)}{\beta_{n-1}},\\
g_{4,c,n}(x)&=f_{2,c,n-1}(x)+\frac{(x-\alpha_{n-1})g_{2,c,n-1}(x)}{\beta_{n-1}},\\
\sigma_{1,c,n}(x)&=\left(\varphi_{c,n}^{1,3}(x)\right)^2\varphi_{c,n}^{1,2}(x),\\
\sigma_{2,c,n}(x)&=\varphi_{c,n}^{1,3}(x)\bigg(\varphi_{c,n}^{1,2}(x)\big(\varphi_{c,n}^{3,2}(x)+\left(\varphi_{c,n}^{1,3}\right)'(x)-\varphi_{c,n}^{1,4}(x)\big)
-\left(\varphi_{c,n}^{1,2}\right)'(x)\varphi_{c,n}^{1,3}(x)\bigg),\\
\sigma_{3,c,n}(x)&=\varphi_{c,n}^{1,2}(x)\big(\varphi_{c,n}^{1,2}(x)\varphi_{c,n}^{3,4}(x)-\varphi_{c,n}^{1,4}(x)\varphi_{c,n}^{3,2}(x)\big)
+\varphi_{c,n}^{1,3}(x)\left((\varphi_{c,n}^{3,2})'(x)\varphi_{c,n}^{1,2}(x)-\left(\varphi_{c,n}^{1,2}\right)'(x)\varphi_{c,n}^{3,2}(x)\right),
\end{align*}
where $$\varphi_{c,n}^{i,j}(x)=f_{i,c,n}(x)g_{j,c,n}(x)-f_{j,c,n}(x)g_{i,c,n}(x), \qquad i,j\in\{1,2,3,4\}.
$$
}

\end{document}